\theoremstyle{plain} 
\newtheorem{theorem}{\indent\sc Theorem}[section]
\newtheorem{lemma}[theorem]{\indent\sc Lemma}
\newtheorem{proposition}[theorem]{\indent\sc Proposition}
\theoremstyle{definition} 
\newtheorem{definition}[theorem]{\indent\sc Definition}
\newtheorem{remark}[theorem]{\indent\sc Remark}
\title{Remarks on metallic warped product manifolds} 
\author{Adara M. Blaga and Cristina E. Hre\c tcanu}
\date{}
\begin{document}
\maketitle

\markboth{{\small\it {\hspace{4cm} Metallic warped product manifolds}}}{\small\it{Metallic warped product manifolds
\hspace{4cm}}}

\footnote{ 
2010 \textit{Mathematics Subject Classification}.
53C15, 53C25.
}
\footnote{ 
\textit{Key words and phrases}.
warped product manifold; metallic Riemannian structure.
}

\begin{abstract}
We characterize the metallic structure on the product of two metallic manifolds in terms of metallic maps
and provide a necessary and sufficient condition for the warped product of two locally metallic Riemannian manifolds to be locally metallic.
The particular case of product manifolds is discussed and an example of
metallic warped product Riemannian manifold is provided.
\end{abstract}

\section{Introduction}

Starting from a polynomial structure, which was generally defined by S. I. Goldberg, K. Yano and N. C. Petridis in (\cite{Goldberg1} and \cite{Goldberg2}), we consider a polynomial structure on an $m$-dimensional Riemannian manifold $(M,g)$, called by us a
\textit{metallic structure} (\cite{CrHr}, \cite{Hr2}, \cite{CrHrMu} and \cite{Hr3}), determined by a $(1,1)$-tensor field $J$ which satisfies the equation:
 \begin{equation}
 J^{2}= pJ + qI,
 \end{equation}
where $I$ is the identity operator on the Lie algebra of
vector fields on $M$ identified with the set of smooth sections $\Gamma(T(M))$ (and we'll simply denote $X\in T(M)$), with $p$ and $q$ are non zero natural numbers. From the definition, we easily get the recurrence relation:
\begin{equation}
J^{n+1}=g_{n+1}\cdot J+g_{n}\cdot I,
\end{equation}
where $(\{g_n\}_{n\in\mathbb{N}^*})$ is the generalized secondary Fibonacci sequence defined by $g_{n+1}=pg_n+qg_{n-1}$, $n\geq 1$ with $g_0=0$, $g_1=1$ and $p$, $q\in \mathbb{N}^*$.

If $(M,g)$ is a Riemannian manifold endowed with a metallic structure $J$ such that the Riemannian metric $g$ is $J$-compatible (i.e. $g(J X,Y)=g(X,J Y)$, for any $X, Y\in T(M)$), then $(M,g,J)$ is called a {\it metallic Riemannian manifold}. In this case:
\begin{equation}
g(JX, JY)=pg(X, JY)+qg(X, Y),
\end{equation}
for any $X, Y\in T(M)$.

\pagebreak

It is known (\cite{Hr4}) that an almost product structure $F$ on $M$ induces two metallic structures:
\begin{equation}\label{p2}
J_{\pm}=\pm \frac{2\sigma _{p, q}-p}{2}F+\frac{p}{2}I
\end{equation}
and, conversely, every metallic structure $J$ on $M$ induces two almost product structures:
\begin{equation}
F_{\pm }=\pm \frac{2}{2\sigma _{p, q}-p}J-\frac{p}{2\sigma _{p, q}-p}I,
\end{equation}
where $\sigma _{p,q}=\frac{p+\sqrt{p^{2}+4q}}{2}$ is the metallic number, which is the positive solution of the equation $x^{2}-px-q=0$, for $p$ and $q$ non zero natural numbers.

In particular, if the almost product structure $F$ is compatible with the Riemannian metric, then $J_{+}$ and $J_{-}$ are metallic Riemannian structures.

On a metallic manifold $(M,J)$ there exist two complementary distributions $\mathcal{D}_{l}$ and $\mathcal{D}_{m}$
corresponding to the projection operators $l$ and $m$ (\cite{Hr4}) given by:
\begin{equation}\label{p1}
l=-\frac{1}{2\sigma _{p, q}-p} J+\frac{\sigma _{p, q}}{2\sigma _{p, q}-p} I, \quad m=\frac{1}{2\sigma _{p, q}-p} J+\frac{\sigma _{p, q}-p}{2\sigma _{p, q}-p} I.
\end{equation}

The analogue concept of locally product manifold is considered in the context of metallic geometry. Precisely, we say that the metallic Riemannian manifold $(M,g, J)$ is \textit{locally metallic} if $J$ is parallel with respect to the Levi-Civita connection associated to $g$.

\section{Metallic warped product Riemannian manifolds}

\subsection{Warped product manifolds}

Let $(M_1,g_1)$ and $({M_2},g_2)$ be two Riemannian manifolds of dimensions $n$ and $m$, respectively. Denote by $p_1$ and $p_2$ the projection maps from the product manifold ${M_1}\times {M_2}$ onto ${M_1}$ and ${M_2}$ and by $\widetilde{\varphi}:=\varphi \circ p_1$ the lift to ${M_1}\times {M_2}$ of a smooth function $\varphi$ on ${M_1}$. In this case, we call ${M_1}$ \textit{the base} and ${M_2}$ \textit{the fiber} of ${M_1}\times {M_2}$. The unique element $\widetilde{X}$ of $T({M_1}\times {M_2})$ that is $p_1$-related to $X\in T({M_1})$ and to the zero vector field on ${M_2}$ will be called the \textit{horizontal lift of $X$} and the unique element $\widetilde{V}$ of $T({M_1}\times {M_2})$ that is $p_2$-related to $V\in T({M_2})$ and to the zero vector field on ${M_1}$ will be called the \textit{vertical lift of $V$}.
Also denote by $\mathcal{L}({M_1})$ the set of all horizontal lifts of vector fields on ${M_1}$ and by $\mathcal{L}({M_2})$ the set of all vertical lifts of vector fields on ${M_2}$.

\medskip

For $f>0$ a smooth function on ${M_1}$, consider the Riemannian metric on ${M_1}\times {M_2}$:
\begin{equation}\label{e7}
\widetilde{g}:=p_1^* g_1+(f\circ p_1)^2p_2^*g_2.
\end{equation}

\begin{definition} (\cite{bi})
The product manifold of ${M_1}$ and ${M_2}$ together with the Riemannian metric $\widetilde{g}$ defined by (\ref{e7}) is called \textit{the warped product} of ${M_1}$ and ${M_2}$ by the warping function $f$ [and it is denoted by $(\widetilde{M}:={M_1}\times_f {M_2},\widetilde{g})$].
\end{definition}

Remark that if $f$ is constant (equal to $1$), the warped product becomes the usual product of the Riemannian manifolds.

\bigskip

For $(x, y)\in \widetilde{M}$, we shall identify $X\in T(M_{1})$ with $(X_{x}, 0_{y})\in T_{(x,y)}(\widetilde{M})$ and $Y\in T(M_{2})$ with $(0_{x}, Y_{y}) \in T_{(x,y)}(\widetilde{M})$ (\cite{Baik}).

The projection mappings of $T(M_1 \times M_ 2)$ onto $T(M_1)$ and $T(M_ 2)$, respectively, denoted by $\pi_{1}=:Tp_1$ and $\pi_{2}=:Tp_2$ verify:
\begin{equation}
\pi _{1}+\pi _{2}=I, \quad \pi _{1}^{2}=\pi _{1}, \quad \pi _{2}^{2}=\pi _{2}, \quad \pi _{1} \circ \pi _{2}=\pi _{2} \circ \pi _{1} = 0.
\end{equation}

The Riemannian metric of the warped product manifold $\widetilde{M} =M_ 1 \times_f M_ 2$ equals to:
\begin{equation}
\widetilde{g}(\widetilde{X}, \widetilde{Y})=g_{1}(X_ 1, Y_ 1)+(f\circ p_1)^2g_{2}(X_ 2, Y_ 2),
\end{equation}
for any $\widetilde{X}=(X_ 1,X_ 2), \widetilde{Y}=(Y_ 1,Y_ 2) \in T (\widetilde{M})=T(M_ 1 \times_f M_ 2)$ and we notice that the leaves $M_ 1\times \{y\}$, for $y\in M_2$, are totally geodesic submanifolds of $(\widetilde{M} = M_ 1 \times_f M_ 2, \widetilde{g})$.

\bigskip

If we denote by $\widetilde{\nabla}$, $^{M_1}\nabla$, $^{M_2}\nabla$ the Levi-Civita connections on $\widetilde{M}$, ${M_1}$ and ${M_2}$, we know that for any $X_ 1,Y_ 1 \in T(M_ 1)$ and $X_ 2,Y_ 2 \in T(M_ 2)$ (\cite{lu}):
\begin{equation}\label{e1}
\widetilde{\nabla}_{(X_1,X_2)}(Y_1,Y_2)=(^{M_1}\nabla_{X_1}Y_1-\frac{1}{2}g_2(X_2,Y_2)\cdot grad(f^2),
^{M_2}\nabla_{X_2}Y_2+\frac{1}{2f^2}X_1(f^2)Y_2
+\frac{1}{2f^2}Y_1(f^2)X_2).
\end{equation}

In particular:
$$
\widetilde{\nabla}_{(X,0)}(0,Y)=\widetilde{\nabla}_{(0,Y)}(X,0)=(0,
X(\ln(f))Y).$$

\bigskip

Let $R$, $R_{M_1}$, $R_{M_2}$ be the Riemannian curvature tensors on $\widetilde{M}$, ${M_1}$ and ${M_2}$ and $\widetilde{R_{M_1}}$, $\widetilde{R_{M_2}}$ the lift on $\widetilde{M}$ of $R_{M_1}$ and $R_{M_2}$. Then:
\begin{lemma} (\cite{bi}) \label{l2}
If $(\widetilde{M}:={M_1}\times_f {M_2},\widetilde{g})$ is the warped product of ${M_1}$ and ${M_2}$ by the warping function $f$ and $m>1$, then for any $X$, $Y$, $Z\in \mathcal{L}({M_1})$ and any $U$, $V$, $W\in \mathcal{L}({M_2})$, we have:
\begin{enumerate}
  \item $R(X,Y)Z=\widetilde{R_{M_1}}(X,Y)Z$;
  \item $R(U,X)Y=\frac{1}{f}H^f(X,Y)U$, where $H^f$ is the lift on $\widetilde{M}$ of $Hess(f)$;
  \item $R(X,Y)U=R(U,V)X=0$;
  \item $R(U,V)W=\widetilde{R_{M_2}}(U,V)W-\frac{|grad(f)|^2}{f^2}[g(U,W)V-g(V,W)U]$;
  \item $R(X,U)V=\frac{1}{f}g(U,V)\widetilde{\nabla}_X grad(f)$.
\end{enumerate}
\end{lemma}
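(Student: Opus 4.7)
The plan is to establish each of the five identities by a direct case-by-case computation starting from $R(X,Y)Z=\widetilde{\nabla}_X\widetilde{\nabla}_Y Z-\widetilde{\nabla}_Y\widetilde{\nabla}_X Z-\widetilde{\nabla}_{[X,Y]}Z$, using the explicit Koszul-type formula (\ref{e1}) for $\widetilde{\nabla}$. From that formula I would first extract three working identities: for horizontal lifts $X,Y\in\mathcal{L}(M_1)$, $\widetilde{\nabla}_XY$ is itself the horizontal lift of $^{M_1}\!\nabla_X Y$; for $X\in\mathcal{L}(M_1)$ and $U\in\mathcal{L}(M_2)$, $\widetilde{\nabla}_XU=\widetilde{\nabla}_UX=X(\ln f)\,U$ (stated just after (\ref{e1})); and for $U,V\in\mathcal{L}(M_2)$, $\widetilde{\nabla}_UV$ equals the vertical lift of $^{M_2}\!\nabla_UV$ plus the horizontal correction $-\tfrac{1}{2}g_2(U,V)\,\mathrm{grad}(f^2)=-f\,g_2(U,V)\,\mathrm{grad}(f)$. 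One also needs the bracket structure: $[\mathcal{L}(M_1),\mathcal{L}(M_2)]=0$, and $[\mathcal{L}(M_i),\mathcal{L}(M_i)]\subset\mathcal{L}(M_i)$.

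The two easy cases are (1) and (3). For (1), since the covariant derivatives of horizontal lifts remain horizontal lifts and their bracket is horizontal, the whole computation descends to $M_1$, yielding $R(X,Y)Z=\widetilde{R_{M_1}}(X,Y)Z$. For $R(X,Y)U$ in (3), successive applications of $\widetilde{\nabla}$ convert each step into multiplication by the vertical factor $X(\ln f)$ or $Y(\ln f)$, and the alternation with the bracket term $[X,Y](\ln f)\,U$ cancels everything. For $R(U,V)X$ in (3), the key observation is that $\widetilde{\nabla}_UX=X(\ln f)U$ has no dependence on $U$ through $M_1$-directions, so $\widetilde{\nabla}_V(X(\ln f)U)=X(\ln f)\,\widetilde{\nabla}_VU$ and the antisymmetrization kills the remaining terms once $[U,V]$ is vertical and one reapplies the mixed-type formula.

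The substantive work is in (2), (4), (5). For (2), expanding $R(U,X)Y=\widetilde{\nabla}_U\widetilde{\nabla}_XY-\widetilde{\nabla}_X\widetilde{\nabla}_UY-\widetilde{\nabla}_{[U,X]}Y$ gives, on the one hand, $(^{M_1}\!\nabla_XY)(\ln f)\,U$, and on the other, $\widetilde{\nabla}_X(Y(\ln f)U)=X(Y(\ln f))U+Y(\ln f)X(\ln f)U$; the difference is exactly $\tfrac{1}{f}\,\mathrm{Hess}(f)(X,Y)\,U$ after using $\mathrm{Hess}(f)(X,Y)=X(Y(f))-(^{M_1}\!\nabla_XY)(f)$ together with the identity $XY(\ln f)-Y(\ln f)X(\ln f)=\tfrac{1}{f}XY(f)-\tfrac{1}{f}(^{M_1}\!\nabla_XY)(f)$ once the connection term is reinstated. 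For (4), the vertical piece gives the curvature $\widetilde{R_{M_2}}(U,V)W$ via the $^{M_2}\!\nabla$ part, while the two horizontal correction terms combine into a vector proportional to $\widetilde{\nabla}_U\mathrm{grad}(f)$-style expressions that ultimately collapse to the announced scalar $-\tfrac{|\mathrm{grad}(f)|^2}{f^2}\bigl[g(U,W)V-g(V,W)U\bigr]$ after using $\widetilde{g}(\mathrm{grad}(f),\mathrm{grad}(f))=g_1(\mathrm{grad}_{M_1}f,\mathrm{grad}_{M_1}f)$. For (5), differentiating the horizontal correction in $\widetilde{\nabla}_UV$ produces $\tfrac{g_2(U,V)}{f}\widetilde{\nabla}_X\mathrm{grad}(f)$ after rewriting $-\tfrac{1}{2}X(g_2(U,V))\,\mathrm{grad}(f^2)=0$ (since $X$ is horizontal and $g_2(U,V)$ is a pull-back from $M_2$) and absorbing the $X(\ln f)$ factors into $\widetilde{\nabla}_X\mathrm{grad}(f)$.

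The main obstacle is not any single step but the careful bookkeeping of horizontal versus vertical components and of the scaling factors $f$, $f^2$, $\ln f$. In particular, recognizing $\tfrac{1}{f}\mathrm{Hess}(f)$ in (2) and the magnitude $|\mathrm{grad}(f)|^2/f^2$ in (4) requires converting all $\ln f$-derivatives into $f$-derivatives and identifying the resulting expressions with the pointwise invariants of the warping function on the base $M_1$.
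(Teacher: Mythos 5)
The paper itself does not prove this lemma; it quotes it from Bishop--O'Neill, so your plan of deriving it by direct expansion of $R$ from the connection formula (\ref{e1}), organized by horizontal/vertical lifts with the three covariant-derivative identities and the bracket relations, is a legitimate and indeed the standard route, and your treatment of items (1) and (3) is correct. The genuine problem is a systematic sign mismatch in items (2), (4), (5): you fix the convention $R(X,Y)Z=\widetilde{\nabla}_X\widetilde{\nabla}_YZ-\widetilde{\nabla}_Y\widetilde{\nabla}_XZ-\widetilde{\nabla}_{[X,Y]}Z$, but the lemma as stated is written in the opposite (Bishop--O'Neill) convention $R(X,Y)Z=\widetilde{\nabla}_{[X,Y]}Z-[\widetilde{\nabla}_X,\widetilde{\nabla}_Y]Z$. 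With your convention, your own expansion in (2) gives $({}^{M_1}\nabla_XY)(\ln f)\,U-\left[X(Y(\ln f))+Y(\ln f)X(\ln f)\right]U=\frac{({}^{M_1}\nabla_XY)(f)-X(Y(f))}{f}\,U=-\frac{1}{f}H^f(X,Y)U$, the negative of what you assert; likewise (5) comes out as $-\frac{1}{f}g(U,V)\widetilde{\nabla}_X\mathrm{grad}(f)$, and in (4) the correction appears with the opposite sign, $+\frac{|\mathrm{grad}(f)|^2}{f^2}\left[g(U,W)V-g(V,W)U\right]$ (a quick sanity check: the hyperbolic space $\mathbb{R}\times_{e^t}\mathbb{R}^m$ must come out with curvature $-1$ in the standard convention, which forces exactly these signs). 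So the final identifications you claim in (2), (4), (5) fail as written; you must either adopt the convention used in the quoted statement or restate those items with the signs (equivalently, the order of the first two arguments) adjusted.

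A secondary, fixable gap is in (4): the announced correction term is \emph{vertical}, coming from $\widetilde{\nabla}_U\mathrm{grad}(f)=\frac{|\mathrm{grad}(f)|^2}{f}U$ applied to the horizontal part $-f\,g_2(V,W)\,\mathrm{grad}(f)$ of $\widetilde{\nabla}_VW$, whereas your sketch says the horizontal corrections "combine into" it. In fact the genuinely horizontal leftovers proportional to $\mathrm{grad}(f)$, namely $-f\left[g_2(U,{}^{M_2}\nabla_VW)+U(g_2(V,W))-g_2(V,{}^{M_2}\nabla_UW)-V(g_2(U,W))-g_2([U,V],W)\right]\mathrm{grad}(f)$, must be shown to vanish, which uses metric compatibility and torsion-freeness of ${}^{M_2}\nabla$; and to reach the stated form one also needs the conversion $g_2(U,W)=\widetilde{g}(U,W)/f^2$, not only $\widetilde{g}(\mathrm{grad}(f),\mathrm{grad}(f))=g_1(\mathrm{grad}(f),\mathrm{grad}(f))$. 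With the convention fixed and these two points spelled out, your computation closes correctly.
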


Let $S$, $S_{M_1}$, $S_{M_2}$ be the Ricci curvature tensors on $\widetilde{M}$, ${M_1}$ and ${M_2}$ and $\widetilde{S_{M_1}}$, $\widetilde{S_{M_2}}$ the lift on $\widetilde{M}$ of $S_{M_1}$ and $S_{M_2}$. Then:
\begin{lemma} (\cite{bi}) \label{l1}
If $(\widetilde{M}:={M_1}\times_f {M_2},\widetilde{g})$ is the warped product of ${M_1}$ and ${M_2}$ by the warping function $f$ and $m>1$, then for any $X$, $Y\in \mathcal{L}({M_1})$ and any $V$, $W\in \mathcal{L}({M_2})$, we have:
\begin{enumerate}
  \item $S(X,Y)=\widetilde{S_{M_1}}(X,Y)-\frac{m}{f}H^f(X,Y)$, where $H^f$ is the lift on $\widetilde{M}$ of $Hess(f)$;
  \item $S(X,V)=0$;
  \item $S(V,W)=\widetilde{S_{M_2}}(V,W)-\left[\frac{\Delta(f)}{f}+(m-1)\frac{|grad(f)|^2}{f^2}\right]g(V,W)$.
\end{enumerate}
\end{lemma}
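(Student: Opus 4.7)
The plan is to compute each component of the Ricci tensor $S$ as the trace of the curvature tensor $R$ over a $\widetilde{g}$-orthonormal frame adapted to the warped product, then evaluate the resulting sums piece by piece using Lemma~\ref{l2}. First I would fix local $g_{1}$- and $g_{2}$-orthonormal frames $\{e_i\}_{i=1}^{n}$ on $M_1$ and $\{\varepsilon_\alpha\}_{\alpha=1}^{m}$ on $M_2$; since $\widetilde{g}(\varepsilon_\alpha,\varepsilon_\beta)=f^{2}\delta_{\alpha\beta}$ on vertical pairs, the horizontal lifts $e_i$ together with the rescaled vertical lifts $\varepsilon_\alpha/f$ form a $\widetilde{g}$-orthonormal frame on $\widetilde{M}$. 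Then
\[
S(A,B)=\sum_{i=1}^{n}\widetilde{g}\bigl(R(e_i,A)B,e_i\bigr)+\sum_{\alpha=1}^{m}\widetilde{g}\bigl(R(\varepsilon_\alpha/f,A)B,\varepsilon_\alpha/f\bigr),
\]
and each term will be evaluated by the appropriate item of Lemma~\ref{l2}.

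For assertion 1, with $A=X$, $B=Y$ horizontal, part 1 of Lemma~\ref{l2} collapses the first sum to $\widetilde{S_{M_1}}(X,Y)$; in the second sum, part 2 gives $R(\varepsilon_\alpha,X)Y=f^{-1}H^{f}(X,Y)\varepsilon_\alpha$, so each summand contributes $f^{-1}H^{f}(X,Y)$ and running $\alpha$ from $1$ to $m$ produces the coefficient $\tfrac{m}{f}$ in front of $H^{f}(X,Y)$, with the sign inherited from the conventions in Lemma~\ref{l2}. For assertion 2, with $A=X$ horizontal and $B=V$ vertical, the first sum vanishes by part 3 (namely $R(e_i,X)V=0$), while each term of the second sum is, by part 5, a scalar multiple of $\widetilde{\nabla}_X\,grad(f)$, a horizontal vector, hence $\widetilde{g}$-orthogonal to the vertical $\varepsilon_\alpha/f$.

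For assertion 3, with $A=V$, $B=W$ vertical, part 5 converts each term of the horizontal sum to $f^{-1}g(V,W)\,\widetilde{g}\bigl(\widetilde{\nabla}_{e_i}grad(f),e_i\bigr)$; since $grad(f)$ coincides with the horizontal lift of $grad_{M_{1}}(f)$, summing over $i$ recovers the Laplacian $\Delta(f)$, yielding the $\tfrac{\Delta(f)}{f}g(V,W)$ contribution. Part 4 splits the vertical sum into two pieces: the $\widetilde{R_{M_2}}$ piece reassembles into $\widetilde{S_{M_2}}(V,W)$, as the two factors $1/f$ coming from the rescaled vertical frame cancel the $f^{2}$ in $\widetilde{g}$ on vertical pairs; the $|grad(f)|^{2}/f^{2}$ piece collapses, via the orthonormal expansion $\sum_\alpha\widetilde{g}(V,\varepsilon_\alpha/f)\widetilde{g}(W,\varepsilon_\alpha/f)=\widetilde{g}(V,W)$, into a multiple of $(m-1)\tfrac{|grad(f)|^{2}}{f^{2}}g(V,W)$, where the coefficient $m-1$ (rather than $m$) appears because exactly one of the $m$ diagonal contributions is absorbed by the Kronecker-type contraction with $g(\varepsilon_\alpha,W)$.

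I expect the main technical obstacle to be the consistent bookkeeping of the warping factor in assertion 3: one must verify that the rescaling $\varepsilon_\alpha\mapsto\varepsilon_\alpha/f$ cancels the $f^{2}$ inside $\widetilde{g}$ in such a way that both the $\widetilde{R_{M_2}}$-trace and the $|grad(f)|^{2}$-trace emerge with the correct normalizations, and then to track how a single index is consumed by the diagonal contraction to produce the coefficient $m-1$. The standing hypothesis $m>1$ enters exactly at this point, ensuring that the fibre-trace contribution is nondegenerate and the coefficient $m-1$ is nonzero.
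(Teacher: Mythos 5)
The paper itself gives no proof of this lemma (it is quoted from Bishop--O'Neill), and your argument is essentially the standard derivation found there: trace the curvature formulas of Lemma \ref{l2} over the adapted orthonormal frame $\{e_i,\ \varepsilon_\alpha/f\}$. The individual steps are sound: assertion 2 follows since $R(e_i,X)V=0$ by item 3 and since $\widetilde{\nabla}_X\,grad(f)$ is horizontal (totally geodesic leaves), so the vertical trace vanishes; in assertion 3 the rescaling by $1/f$ does cancel the $f^2$ in $\widetilde{g}$ so that the $\widetilde{R_{M_2}}$-trace gives exactly $\widetilde{S_{M_2}}(V,W)$, and the bracket in item 4 contributes $m\,\widetilde{g}(V,W)-\widetilde{g}(V,W)$, which is the correct source of the coefficient $m-1$.

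The one point you cannot discharge by appealing to ``the sign inherited from the conventions in Lemma \ref{l2}'' is the relative sign between the $\widetilde{S_{M_i}}$ terms and the correction terms: once Lemma \ref{l2} is taken as stated, that relative sign is fixed by which slot of $R$ you trace over, and the formula you wrote, $S(A,B)=\sum_i\widetilde{g}(R(e_i,A)B,e_i)+\sum_\alpha\widetilde{g}(R(\varepsilon_\alpha/f,A)B,\varepsilon_\alpha/f)$, produces the opposite of the stated result: item 2 then yields $+\frac{m}{f}H^f(X,Y)$ in assertion 1, and items 4 and 5 yield $+\left[\frac{\Delta(f)}{f}+(m-1)\frac{|grad(f)|^2}{f^2}\right]\widetilde{g}(V,W)$ in assertion 3, while the $\widetilde{S_{M_i}}$ terms keep their sign, so the minus signs of the lemma are not recovered. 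To land on the statement you must trace in the other argument, $S(A,B)=\sum_k\widetilde{g}(R(A,E_k)B,E_k)$ (equivalently, adopt the curvature sign convention of the cited reference), using the same convention for $S_{M_1}$ and $S_{M_2}$; with that single adjustment your computation goes through verbatim. Two smaller remarks: in assertion 2 you need the antisymmetry $R(\varepsilon_\alpha/f,X)V=-R(X,\varepsilon_\alpha/f)V$ before invoking item 5, and the hypothesis $m>1$ is not what makes the argument work at the $(m-1)$ step --- the trace identities hold for $m=1$ as well --- it is simply carried over from the curvature lemma being quoted.
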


\begin{remark}
For the case of product Riemannian manifolds:

i) the Riemannian curvature tensors verify (\cite{Atceken2}):
\begin{equation}\label{e2}
R(\widetilde{X},\widetilde{Y})\widetilde{Z}=(R_{1}(X_{1},Y_{1})Z_{1},R_{2}(X_{2},Y_{2})Z_{2}),
\end{equation}
for any $\widetilde{X}=(X_{1},X_{2}),\widetilde{Y}=(Y_{1},Y_{2}),\widetilde{Z}=(Z_{1},Z_{2})\in T(M_{1} \times M_{2})$, where
$R$, $R_{1}$ and $R_{2}$ are respectively the Riemannian curvature tensors of the Riemannian manifolds $(M_1\times M_2, \widetilde{g})$, $(M_{1},g_{1})$ and $(M_{2},g_{2})$;

ii) the Ricci curvature tensors verify (\cite{Atceken2}):
\begin{equation}
S(\widetilde{X},\widetilde{Y})=S_{1}(X_{1},Y_{1})+S_{2}(X_{2},Y_{2}),
\end{equation}
for any $\widetilde{X}=(X_{1},X_{2}),\widetilde{Y}=(Y_{1},Y_{2}) \in T(M_{1} \times M_{2})$, where
$S$, $S_{1}$ and $S_{2}$ are respectively the Ricci curvature tensors of the Riemannian manifolds $(M_{1} \times M_{2},\widetilde{g})$, $(M_{1},g_{1})$ and $(M_{2},g_{2})$.
\end{remark}

Remark that the Riemannian curvature tensor of a locally metallic Riemannian manifold has the following properties:

\begin{proposition}\label{p}
If $(M,g,J)$ is a locally metallic Riemannian manifold, then for any $X,Y,Z \in T(M)$:
\begin{equation}\label{e3}
R(X,Y)JZ=J(R(X,Y)Z),
\end{equation}
\begin{equation}\label{e4}
R(JX,Y)=R(X,JY),
\end{equation}
\begin{equation}\label{e5}
R(JX,JY)=qR(JX,Y)+pR(X,Y),
\end{equation}
\begin{equation}\label{e6}
R(J^{n+1}X,Y)=g_{n+1}\cdot R(JX,Y)+g_{n}\cdot R(X,Y),
\end{equation}
where $(\{g_n\}_{n\in\mathbb{N}^*})$ is the generalized secondary Fibonacci sequence defined by $g_{n+1}=pg_n+qg_{n-1}$, $n\geq 1$ with $g_0=0$, $g_1=1$ and $p$, $q\in \mathbb{N}^*$.
\end{proposition}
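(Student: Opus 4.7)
\emph{Approach.} The four identities form a chain in which each is an essentially algebraic consequence of its predecessor once the geometric input $\nabla J = 0$ has been used, and that input enters only in the proof of (\ref{e3}). I would therefore prove (\ref{e3}) first, then derive (\ref{e4}) from the symmetries of the Riemannian curvature tensor, and finally deduce (\ref{e5}) and (\ref{e6}) from the metallic relation $J^{2}=pJ+qI$ and its iterates.

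For (\ref{e3}), since $J$ is parallel with respect to the Levi-Civita connection, $\nabla_{W}\circ J = J\circ\nabla_{W}$ on $T(M)$ for every $W$. Unpacking
\[
R(X,Y)(JZ) = \nabla_{X}\nabla_{Y}(JZ) - \nabla_{Y}\nabla_{X}(JZ) - \nabla_{[X,Y]}(JZ)
\]
and pulling $J$ outside each covariant derivative immediately yields $R(X,Y)(JZ)=J\,R(X,Y)Z$.

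For (\ref{e4}), I would pass to the $(0,4)$ tensor and combine the pair-swap symmetry $g(R(A,B)C,D)=g(R(C,D)A,B)$ of the Levi-Civita curvature with the $J$-compatibility of $g$ and the identity just established:
\[
g(R(JX,Y)Z,W) = g(R(Z,W)JX,Y) = g(J\,R(Z,W)X,Y) = g(R(Z,W)X,JY) = g(R(X,JY)Z,W),
\]
and then invoke non-degeneracy of $g$ in $W$.

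For (\ref{e5}), I use (\ref{e4}) to shift $J$ from the first slot to the second, writing $R(JX,JY)=R(X,J^{2}Y)$, then substitute $J^{2}Y = pJY + qY$ and expand by bilinearity of $R$; a last application of (\ref{e4}) rewrites $R(X,JY)$ as $R(JX,Y)$, producing the stated linear combination of $R(JX,Y)$ and $R(X,Y)$. Finally, (\ref{e6}) is immediate from the recurrence $J^{n+1}X = g_{n+1}\,JX + g_{n}\,X$ together with the linearity of $R$ in its first argument, so no separate induction is required. The only delicate point is the pair-swap symmetry invoked in (\ref{e4}); it is available here precisely because $\nabla$ is the Levi-Civita connection and hence both torsion-free and metric.
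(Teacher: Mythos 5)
Your overall strategy coincides with the paper's (very terse) proof: (\ref{e3}) from $\nabla J=0$ and the definition of $R$; (\ref{e4}) from the pair symmetry of the $(0,4)$-curvature tensor together with the $J$-compatibility of $g$ and (\ref{e3}); and the last two identities from $J^{2}=pJ+qI$ and the recurrence $J^{n+1}=g_{n+1}\cdot J+g_{n}\cdot I$. Your write-ups of (\ref{e3}), (\ref{e4}) and (\ref{e6}) are correct and supply exactly the details the paper leaves out.

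There is, however, a genuine problem in your treatment of (\ref{e5}): the computation you describe does not yield the identity as stated. From (\ref{e4}) and $J^{2}Y=pJY+qY$ one gets
\[
R(JX,JY)=R(X,J^{2}Y)=pR(X,JY)+qR(X,Y)=pR(JX,Y)+qR(X,Y),
\]
so the coefficient of $R(JX,Y)$ is $p$ and that of $R(X,Y)$ is $q$, whereas (\ref{e5}) as printed asserts $qR(JX,Y)+pR(X,Y)$. The two expressions agree only when $p=q$ (for instance in the golden case $p=q=1$); in general their difference is $(p-q)\left[R(JX,Y)-R(X,Y)\right]$, which has no reason to vanish. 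So the printed (\ref{e5}) appears to have $p$ and $q$ interchanged, and your assertion that the expansion ``produces the stated linear combination'' glosses over exactly this point. You should either record the coefficients you actually obtain (thereby flagging the misprint), or explain why the printed form would hold --- which your argument, as written, does not do. The rest of the proposal, including the remark that (\ref{e6}) needs no separate induction because the recurrence for $J^{n+1}$ is already available, is sound.
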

\begin{proof}
The locally metallic condition $\nabla J=0$ is equivalent to $\nabla_XJY=J(\nabla_XY)$, for any $X,Y \in T(M)$ and (\ref{e3}) follows from the definition of $R$.
The relations (\ref{e4}), (\ref{e5}) and (\ref{e6}) follows from the symmetries of $R$ and from the recurrence relation $J^{n+1}=g_{n+1}\cdot J+g_{n}\cdot I$.
\end{proof}

\begin{theorem}
If $(\widetilde{M}:={M_1}\times_f {M_2},\widetilde{g}, \widetilde{J})$ is a locally metallic Riemannian warped product manifold, then $M_2$ is $\widetilde{J}$-invariant submanifold of $\widetilde{M}$.
\end{theorem}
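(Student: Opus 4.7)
The plan is to use equation (\ref{e3}) of Proposition \ref{p}, namely $\widetilde{J}R(X,U)V=R(X,U)\widetilde{J}V$ for $X\in\mathcal{L}(M_1)$ and $U,V\in\mathcal{L}(M_2)$, and to read off its consequence via the warped-product curvature formulas of Lemma \ref{l2}. Decompose $\widetilde{J}V=(\widetilde{J}V)^{M_1}+(\widetilde{J}V)^{M_2}$ along the horizontal/vertical splitting; the task reduces to showing $(\widetilde{J}V)^{M_1}=0$.

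First I would expand both sides. Lemma \ref{l2}(5) writes the input curvature as $R(X,U)V=\tfrac{1}{f}g(U,V)\widetilde{\nabla}_X grad(f)$, a horizontal vector, so the left-hand side becomes $\tfrac{g(U,V)}{f}\widetilde{J}(\widetilde{\nabla}_X grad(f))$. For the right-hand side, split $\widetilde{J}V$ into horizontal and vertical parts: Lemma \ref{l2}(2) applied to the horizontal summand produces $R(X,U)(\widetilde{J}V)^{M_1}=-\tfrac{1}{f}H^f(X,(\widetilde{J}V)^{M_1})U$, while Lemma \ref{l2}(5) applied to the vertical summand produces $R(X,U)(\widetilde{J}V)^{M_2}=\tfrac{1}{f}g(U,(\widetilde{J}V)^{M_2})\widetilde{\nabla}_X grad(f)$. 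Equating the two expressions and projecting onto the vertical distribution gives
\[g(U,V)\,(\widetilde{J}(\widetilde{\nabla}_X grad(f)))^{M_2}+H^f(X,(\widetilde{J}V)^{M_1})\,U=0.\]

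The decisive step exploits the hypothesis $\dim M_2>1$ that is present in Lemma \ref{l2}: choose $U\in\mathcal{L}(M_2)$ with $g(U,V)=0$ and $U\neq 0$, so that the first summand vanishes and one concludes $H^f(X,(\widetilde{J}V)^{M_1})=0$ for every horizontal $X$. Hence $(\widetilde{J}V)^{M_1}$ lies in the radical of $H^f$, and at any point where the Hessian of $f$ is non-degenerate this already forces $(\widetilde{J}V)^{M_1}=0$. The parallelism of $\widetilde{J}$, combined with the connection formula displayed right after (\ref{e1}), gives $^{M_1}\nabla_X(\widetilde{J}V)^{M_1}=X(\ln f)(\widetilde{J}V)^{M_1}$, so $(\widetilde{J}V)^{M_1}/f$ is $M_1$-parallel along each leaf $M_1\times\{y\}$; a pointwise vanishing therefore propagates along the entire leaf, yielding $\widetilde{J}V\in T(M_2)$ globally.

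The principal obstacle is the exceptional case in which $Hess(f)$ vanishes identically on $M_1$, so that the displayed curvature equation carries no information. To close this case one has to fall back on the companion identity (\ref{e4}) of Proposition \ref{p} applied to two horizontal arguments, combined with the algebraic identity $\widetilde{J}^2=p\widetilde{J}+qI$ and the observation that $q>0$ prevents the roots of $x^2-px-q=0$ from being equal to $p/2$, so as to rule out any off-diagonal mixing operator between the horizontal and vertical distributions.
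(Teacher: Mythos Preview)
Your route differs from the paper's. The paper invokes identity (\ref{e4}) of Proposition~\ref{p}, $R(\widetilde{J}\,\cdot\,,\,\cdot\,)=R(\,\cdot\,,\widetilde{J}\,\cdot\,)$, applied to $R(U,X)Y$ with $U\in\mathcal{L}(M_2)$ and $X,Y\in\mathcal{L}(M_1)$, and reads off via Lemma~\ref{l2}(2) the single relation
\[
H^{f}(X,Y)\,\widetilde{J}U \;=\; H^{f}(\widetilde{J}X,Y)\,U,
\]
whose right-hand side is vertical; hence wherever $H^{f}$ does not vanish, $\widetilde{J}U$ is a scalar multiple of $U$ and in particular vertical. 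You instead use (\ref{e3}) together with item~(5) of Lemma~\ref{l2}, arrive at a two-term vertical identity, and then need an orthogonality trick (requiring $\dim M_2>1$) plus a parallel-transport argument to reach the same conclusion. The paper's choice of (\ref{e4}) and of the curvature slot $R(U,X)Y$ avoids all of that extra work.

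As for the degenerate case you flag: your fallback sketch does not close it, and in fact it cannot be closed. When $f$ is constant (so $Hess(f)\equiv 0$ and $\widetilde{M}=M_1\times M_2$ is an ordinary Riemannian product) there are parallel metallic structures that do \emph{not} leave $T(M_2)$ invariant. For instance on $\mathbb{R}\times\mathbb{R}$ with the flat metric, conjugate $\mathrm{diag}(\sigma_{p,q},\,p-\sigma_{p,q})$ by a $45^{\circ}$ rotation; the resulting constant symmetric endomorphism satisfies $J^{2}=pJ+qI$ and $\nabla J=0$, yet its off-diagonal entry $(2\sigma_{p,q}-p)/2\neq 0$ mixes the two factors. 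So the statement, as written, tacitly assumes $Hess(f)\not\equiv 0$; under that assumption the paper's argument (and yours, with the extra steps) goes through, but the ``exceptional case'' you try to rescue is a genuine counterexample, not a removable obstacle.
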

\begin{proof}
Applying (\ref{e4}) from Proposition \ref{p} and Lemma \ref{l2}, we obtain $H^f(X,Y)\widetilde{J}U=H^f(\widetilde{J}X,Y)U$, for any $X$, $Y\in \mathcal{L}({M_1})$ and any $U\in \mathcal{L}({M_2})$, where $H^f$ is the lift on $\widetilde{M}$ of $Hess(f)$.
\end{proof}

\subsection{Metallic warped product Riemannian manifolds}

\textbf{2.2.1. Metallic Riemannian structure on $(\widetilde{M},\widetilde{g})$ induced by the projection operators}\\

The endomorphism
\begin{equation}
F:=\pi _{1}-\pi _{2}
\end{equation}
verifies $F^{2}=I$ and $\widetilde{g}(F\widetilde{X}, \widetilde{Y})= \widetilde{g}(\widetilde{X}, F\widetilde{Y})$, thus $F$ is an almost product structure on $M_ 1 \times M_ 2$.

By using relations (\ref{p2}) we can construct on $M_ 1 \times M_ 2$ two metallic structures, given by:
\begin{equation}
\widetilde{J}_{\pm}=\pm \frac{2\sigma_{p,q}-p}{2} F+\frac{p}{2}I.
\end{equation}

Also from $\widetilde{g}(F\widetilde{X}, \widetilde{Y})= \widetilde{g}(\widetilde{X}, F\widetilde{Y})$ follows $\widetilde{g}(\widetilde{J}_{\pm}\widetilde{X}, \widetilde{Y})= \widetilde{g}(\widetilde{X}, \widetilde{J}_{\pm}\widetilde{Y})$. Therefore, we can state the following result:

\begin{theorem} There exist two metallic Riemannian structure $\widetilde{J}_{\pm}$ on $(\widetilde{M}, \widetilde{g})$ given by:
\begin{equation}\label{e66}
\widetilde{J}_{\pm}=\pm \frac{2\sigma_{p,q}-p}{2} F+\frac{p}{2}I,
\end{equation}
where $\widetilde{M} = M_ 1 \times_f M_ 2$ and $\widetilde{g}(\widetilde{X}, \widetilde{Y})=g_ 1(X_ 1, Y_ 1)+(f\circ p_1)^2g_ 2(X_ 2, Y_ 2)$, for any $\widetilde{X}=(X_ 1,X_ 2), \widetilde{Y}=(Y_1,Y_ 2) \in T (\widetilde{M})=T(M_ 1\times_f M_ 2)$.
\end{theorem}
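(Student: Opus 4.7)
The plan is to verify the two ingredients that make the formula~(\ref{e66}) produce metallic Riemannian structures via the general mechanism~(\ref{p2}): namely, that $F := \pi_1 - \pi_2$ is an almost product structure on $M_1 \times M_2$ and that $F$ is $\widetilde{g}$-compatible. Once these are in place, (\ref{p2}) will immediately yield two metallic structures $\widetilde{J}_\pm$, and the $\widetilde{g}$-compatibility of $\widetilde{J}_\pm$ will follow from the fact that they are real affine combinations of $F$ and $I$.

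First I would compute $F^2$ directly from the stated identities $\pi_1^2 = \pi_1$, $\pi_2^2 = \pi_2$, $\pi_1 \circ \pi_2 = \pi_2 \circ \pi_1 = 0$ and $\pi_1 + \pi_2 = I$. Expanding gives $F^2 = \pi_1^2 - \pi_1\pi_2 - \pi_2\pi_1 + \pi_2^2 = \pi_1 + \pi_2 = I$, so $F$ is an almost product structure on the smooth manifold $M_1\times M_2$.

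Next I would check the symmetry condition $\widetilde{g}(F\widetilde{X},\widetilde{Y}) = \widetilde{g}(\widetilde{X}, F\widetilde{Y})$. For $\widetilde{X}=(X_1,X_2)$ and $\widetilde{Y}=(Y_1,Y_2)$, applying the warped-metric formula shows that both sides expand to $g_1(X_1,Y_1) - (f\circ p_1)^2 g_2(X_2,Y_2)$, so they coincide. The warping factor $(f\circ p_1)^2$ does not spoil the symmetry because $F$ preserves the horizontal/vertical decomposition on which $\widetilde{g}$ is diagonal.

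Finally, I would invoke~(\ref{p2}) with this $F$ to obtain $\widetilde{J}_\pm = \pm\tfrac{2\sigma_{p,q}-p}{2} F + \tfrac{p}{2} I$, which are metallic structures by construction. The $\widetilde{g}$-compatibility $\widetilde{g}(\widetilde{J}_\pm\widetilde{X},\widetilde{Y}) = \widetilde{g}(\widetilde{X},\widetilde{J}_\pm\widetilde{Y})$ is then inherited linearly from the compatibility of $F$ and the trivial compatibility of $I$. I do not expect a genuine obstacle here, since every step reduces either to the algebra of the projection identities or to a direct application of the earlier reduction~(\ref{p2}); the only thing that requires a moment of care is confirming that the warping function does not interfere with the symmetry of $F$, and it does not.
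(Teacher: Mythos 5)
Your proposal is correct and follows essentially the same route as the paper: verify $F^2=I$ from the projection identities, check $\widetilde{g}(F\widetilde{X},\widetilde{Y})=\widetilde{g}(\widetilde{X},F\widetilde{Y})$ directly (both sides equal $g_1(X_1,Y_1)-(f\circ p_1)^2g_2(X_2,Y_2)$), and then apply the construction~(\ref{p2}), with $\widetilde{g}$-compatibility of $\widetilde{J}_{\pm}$ inherited by linearity. No gaps; this matches the paper's argument.
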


Remark that for $\widetilde{J}_{+}= \frac{\displaystyle 2\sigma_{p,q}-p}{\displaystyle 2} F+\frac{\displaystyle p}{\displaystyle 2}I$, the projection operators are $\pi _{1}=m, \: \pi _{2}=l$ and for $\widetilde{J}_{-}=- \frac{\displaystyle 2\sigma_{p,q}-p}{\displaystyle 2} F+\frac{\displaystyle p}{\displaystyle 2}I$ we have $\pi _{1}=l, \: \pi _{2}=m$, where $m$ and $l$ are given by (\ref{p1}).

\begin{remark}
If we denote by $\widetilde{\nabla}$ the Levi-Civita connection on $\widetilde{M}$ with respect to $\widetilde{g}$, we can check that $\widetilde{\nabla}F=0$ [hence $\widetilde{\nabla}\widetilde{J}_{\pm}=0$ and so $(\widetilde{M}=M_{1} \times_f M_{2},\widetilde{g},\widetilde{J}_{\pm})$ is a locally metallic Riemannian manifold].
\end{remark}

For the case of a product Riemannian manifold $(\widetilde{M} = M_ 1 \times M_ 2, \widetilde{g})$ with $\widetilde{g}$ given by (\ref{e7}) for $f=1$ and $\widetilde{J}_{\pm}$ defined by (\ref{e66}), we deduce that the Riemann curvature of $\widetilde{\nabla}$ verifies (\ref{e3}), (\ref{e4}), (\ref{e5}), (\ref{e6}).

\bigskip

\textbf{2.2.2. Metallic Riemannian structure on $(\widetilde{M},\widetilde{g})$ induced by two metallic structures on $M_1$ and $M_2$}\\

For any vector field $\widetilde{X}=(X, Y) \in T(M_1\times M_2)$ we define a linear map $\widetilde{J}$ of tangent space $T(M_1\times M_2)$ into itself by:
\begin{equation}
\widetilde{J}\widetilde{X}=(J_{1}X, J_{2}Y),
\end{equation}
where $J_{1}$ and $J_{2}$ are two metallic structures defined on $M_{1}$ and $M_{2}$, respectively, with $J_i^2=pJ_i+qI$, $i\in \{1,2\}$ and $p$, $q$ non zero natural numbers.
It follows that:
\begin{equation}
\widetilde{J}^{2}\widetilde{X} =\widetilde{J}(J_{1}X, J_{2}Y)= (J_{1}^{2}X,J_{2}^{2}Y)=
(pJ_{1}X +qX, pJ_{2}Y +qY)= p(J_{1}X,J_{2}Y) +q(X,Y).
\end{equation}
Also from $g_i(J_iX_i,Y_i)=g_i(X_i,J_iY_i)$, $i\in \{1,2\}$, we get $\widetilde{g}(\widetilde{J}\widetilde{X},\widetilde{Y})=\widetilde{g}(\widetilde{X},\widetilde{J}\widetilde{Y})$. Therefore, we can state the following result:

\begin{theorem} If $(M_ 1, g_ 1, J_ 1)$ and $(M_ 2, g_ 2, J_ 2)$ are metallic Riemannian manifolds with $J_i^2=pJ_i+qI$, $i\in \{1,2\}$ and $p$, $q$ non zero natural numbers, then there exists a metallic Riemannian structure $\widetilde{J}$ on $(\widetilde{M}, \widetilde{g})$ given by:
\begin{equation}\label{e67}
\widetilde{J}\widetilde{X}=(J_{1}X, J_{2}Y),
\end{equation}
for any $\widetilde{X}=(X,Y) \in T (\widetilde{M})$, where $\widetilde{M} = M_ 1 \times_f M_ 2$ and $\widetilde{g}(\widetilde{X}, \widetilde{Y})=g_ 1(X_ 1, Y_ 1)+(f\circ p_1)^2g_ 2(X_ 2, Y_ 2)$, for any $\widetilde{X}=(X_ 1,X_ 2), \widetilde{Y}=(Y_1,Y_ 2) \in T (\widetilde{M})=T(M_ 1\times_f M_ 2)$.
\end{theorem}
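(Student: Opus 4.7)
The plan is to verify the three conditions that make $(\widetilde{M}, \widetilde{g}, \widetilde{J})$ a metallic Riemannian manifold: (i) $\widetilde{J}$ is a well-defined $(1,1)$-tensor field on $\widetilde{M}$, (ii) $\widetilde{J}$ satisfies the metallic polynomial equation $\widetilde{J}^2 = p\widetilde{J} + qI$, and (iii) the warped metric $\widetilde{g}$ is $\widetilde{J}$-compatible. Most of the work has in fact been carried out in the paragraphs preceding the theorem, so the proof is essentially an assembly of those computations.

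First I would check that formula (\ref{e67}) gives a genuine tensor field on $\widetilde{M}$: since each tangent vector at a point $(x,y)\in M_1\times_f M_2$ splits uniquely through the horizontal/vertical identification as $(X,Y)$ with $X\in T_xM_1$ and $Y\in T_yM_2$, and since $J_1, J_2$ are tensor fields on their respective factors, the endomorphism $\widetilde{J}$ is well-defined and smooth. Note that this step uses nothing about the warping function $f$, only the product structure at the level of tangent spaces.

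Next, I would verify the metallic equation by the componentwise computation already displayed just above the theorem statement: using $J_i^2 = pJ_i + qI$ on each factor gives
\begin{equation*}
\widetilde{J}^2\widetilde{X} = (J_1^2 X, J_2^2 Y) = (pJ_1X+qX,\, pJ_2Y+qY) = p\widetilde{J}\widetilde{X} + q\widetilde{X},
\end{equation*}
so that $\widetilde{J}^2 = p\widetilde{J} + qI$. Finally, for the $\widetilde{J}$-compatibility, using the definition of $\widetilde{g}$ and the $J_i$-compatibility of each $g_i$, I would compute
\begin{equation*}
\widetilde{g}(\widetilde{J}\widetilde{X},\widetilde{Y}) = g_1(J_1X_1,Y_1) + (f\circ p_1)^2 g_2(J_2X_2,Y_2) = g_1(X_1,J_1Y_1) + (f\circ p_1)^2 g_2(X_2,J_2Y_2) = \widetilde{g}(\widetilde{X},\widetilde{J}\widetilde{Y}).
\end{equation*}
The positive conformal factor $(f\circ p_1)^2$ is a scalar and therefore passes through the symmetry without difficulty, which is why the warping does not obstruct the construction.

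There is no real obstacle here: the only subtle point is that the assertion requires both $J_1$ and $J_2$ to satisfy the same polynomial equation $x^2 = px + q$ (with the same pair of natural numbers $(p,q)$), which is built into the hypothesis. If the metallic structures on the two factors corresponded to different metallic numbers, then $\widetilde{J}^2$ would not be expressible as $p\widetilde{J}+qI$ with scalar coefficients, and the construction would break down. Under the stated hypothesis, however, collecting the three verifications above completes the proof.
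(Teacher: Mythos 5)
Your proposal is correct and follows essentially the same route as the paper: the componentwise verification of $\widetilde{J}^2=p\widetilde{J}+qI$ and the compatibility $\widetilde{g}(\widetilde{J}\widetilde{X},\widetilde{Y})=\widetilde{g}(\widetilde{X},\widetilde{J}\widetilde{Y})$ via the $J_i$-compatibility of each $g_i$ are exactly the computations the paper displays immediately before the theorem. Your added remarks on well-definedness and on the necessity of a common pair $(p,q)$ are sound but only make explicit what the paper leaves implicit.
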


For the case of a product Riemannian manifold $(\widetilde{M} = M_ 1 \times M_ 2, \widetilde{g})$ with $\widetilde{g}$ given by (\ref{e7}) for $f=1$ and $\widetilde{J}_{\pm}$ defined by (\ref{e66}), we deduce that the Riemann curvature of $\widetilde{\nabla}$ verifies (\ref{e3}), (\ref{e4}), (\ref{e5}), (\ref{e6}).

\bigskip

Now we shall obtain a characterization of the metallic structure on the product of two metallic manifolds $(M_{1},J_{1})$ and $(M_{2},J_{2})$ in terms of \textit{metallic maps}, that are smooth maps $\Phi:M_1\rightarrow M_2$ satisfying:
$$T\Phi \circ J_1=J_2\circ T\Phi.$$

Similarly like in the case of Golden manifolds (\cite{blhr}), we have:
\begin{proposition}
The metallic structure $\widetilde{J}:=(J_1,J_2)$ given by (\ref{e67}) is the only metallic structure on the product manifold $\widetilde{M}=M_{1} \times M_{2}$ such that the projections $p_1$ and $p_2$ on the two factors $M_1$ and $M_2$ are metallic maps.
\end{proposition}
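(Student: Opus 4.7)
The plan is to separate the claim into existence (the $\widetilde{J}$ of (\ref{e67}) makes both $p_1$ and $p_2$ metallic maps) and uniqueness (any other metallic structure with this property must coincide with it).

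For existence, I would just unwind the definitions. Fix $\widetilde{X}=(X,Y)\in T_{(x,y)}(M_1\times M_2)$. Then $Tp_1(\widetilde{X})=X$, while $\widetilde{J}\widetilde{X}=(J_1X,J_2Y)$ gives $Tp_1(\widetilde{J}\widetilde{X})=J_1X=J_1(Tp_1\widetilde{X})$; the analogous one-line computation handles $p_2$. Hence $Tp_i\circ\widetilde{J}=J_i\circ Tp_i$ for $i\in\{1,2\}$, which is exactly the metallic map condition.

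For uniqueness I would let $\overline{J}$ be any metallic structure on $\widetilde{M}$ satisfying $Tp_i\circ\overline{J}=J_i\circ Tp_i$ for $i\in\{1,2\}$, and use the canonical splitting $T_{(x,y)}\widetilde{M}\cong T_xM_1\oplus T_yM_2$ (equivalently, the relation $\pi_1+\pi_2=I$ with $\pi_i=Tp_i$ recalled earlier in the paper). Writing $\overline{J}\widetilde{X}=(A,B)$ in this decomposition, each of the two intertwining conditions pins down one of the components: $A=Tp_1(\overline{J}\widetilde{X})=J_1(Tp_1\widetilde{X})=J_1X$ and $B=Tp_2(\overline{J}\widetilde{X})=J_2(Tp_2\widetilde{X})=J_2Y$. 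Thus $\overline{J}\widetilde{X}=(J_1X,J_2Y)=\widetilde{J}\widetilde{X}$ for every $\widetilde{X}\in T\widetilde{M}$, so $\overline{J}=\widetilde{J}$.

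There is no real obstacle here; in fact the only remark worth making is that the polynomial identity $\overline{J}^2=p\overline{J}+qI$ is not even invoked in the uniqueness step — the two metallic-map conditions alone determine $\overline{J}$ pointwise by splitting the tangent space into the horizontal and vertical factors. That $\widetilde{J}=(J_1,J_2)$ is indeed metallic was already checked in the preceding theorem, so together the two steps give the claim.
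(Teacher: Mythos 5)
Your proof is correct, and it takes the natural route: existence by unwinding $Tp_i\circ\widetilde{J}=J_i\circ Tp_i$ for $\widetilde{J}=(J_1,J_2)$, and uniqueness from the canonical splitting $T_{(x,y)}\widetilde{M}\cong T_xM_1\oplus T_yM_2$ with $\pi_i=Tp_i$, which pins down both components of any candidate structure; your observation that the polynomial identity is never needed in the uniqueness step is accurate. The paper itself gives no argument for this proposition (it defers to the analogous Golden-manifold case in \cite{blhr}), and your write-up is exactly the argument that reference-style proof would consist of, so there is nothing to flag.
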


A necessary and sufficient condition for the warped product of two locally metallic Riemannian manifolds to be locally metallic will be further provided:

\begin{theorem}
Let $(\widetilde{M}=M_{1} \times_f M_{2},\widetilde{g},\widetilde{J})$ (with $\widetilde{g}$ given by (\ref{e7}) and $\widetilde{J}$ given by (\ref{e67})) be the warped product of the locally metallic Riemannian manifolds $(M_{1},g_{1}, J_{1})$ and $(M_{2},g_{2},J_{2})$. Then $(\widetilde{M}=M_{1} \times_f M_{2},\widetilde{g},\widetilde{J})$ is locally metallic if and only if:
$$\left\{
  \begin{array}{ll}
    (df^2 \circ J_1)\otimes I=df^2\otimes J_2 \\
    g_2(J_1\cdot,\cdot)\cdot grad(f^2)=g_2(\cdot,\cdot)\cdot J_1(grad(f^2))
  \end{array}
\right. .$$
\end{theorem}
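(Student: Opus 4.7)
The locally metallic condition is $\widetilde{\nabla}\widetilde{J}=0$, or equivalently $\widetilde{\nabla}_{\widetilde{X}}\widetilde{J}\widetilde{Y}=\widetilde{J}(\widetilde{\nabla}_{\widetilde{X}}\widetilde{Y})$ for all $\widetilde{X},\widetilde{Y}\in T(\widetilde{M})$. The plan is to write both sides out using formula (\ref{e1}) for the warped-product Levi-Civita connection, apply $\widetilde{J}=(J_1,J_2)$ to each slot using (\ref{e67}), and then use the hypotheses ${}^{M_1}\nabla J_1=0$ and ${}^{M_2}\nabla J_2=0$ to cancel the tangential pieces. What remains will be an identity between vector fields whose two components sit in $T(M_1)$ and $T(M_2)$ respectively, and requiring these to vanish for arbitrary $X_1,Y_1\in T(M_1)$ and $X_2,Y_2\in T(M_2)$ will produce exactly the two displayed conditions.

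More concretely, take $\widetilde{X}=(X_1,X_2)$, $\widetilde{Y}=(Y_1,Y_2)$. On the one hand,
$$\widetilde{\nabla}_{\widetilde{X}}\widetilde{J}\widetilde{Y}=\widetilde{\nabla}_{(X_1,X_2)}(J_1Y_1,J_2Y_2),$$
which by (\ref{e1}) has first component ${}^{M_1}\nabla_{X_1}J_1Y_1-\tfrac{1}{2}g_2(X_2,J_2Y_2)\mathrm{grad}(f^2)$ and second component ${}^{M_2}\nabla_{X_2}J_2Y_2+\tfrac{1}{2f^2}X_1(f^2)J_2Y_2+\tfrac{1}{2f^2}(J_1Y_1)(f^2)X_2$. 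On the other hand,
$$\widetilde{J}(\widetilde{\nabla}_{\widetilde{X}}\widetilde{Y})=\bigl(J_1\bigl({}^{M_1}\nabla_{X_1}Y_1\bigr)-\tfrac{1}{2}g_2(X_2,Y_2)J_1(\mathrm{grad}(f^2)),\ J_2\bigl({}^{M_2}\nabla_{X_2}Y_2\bigr)+\tfrac{1}{2f^2}X_1(f^2)J_2Y_2+\tfrac{1}{2f^2}Y_1(f^2)J_2X_2\bigr).$$
The parallelism of $J_1$ and $J_2$ in the factors kills ${}^{M_1}\nabla_{X_1}J_1Y_1-J_1({}^{M_1}\nabla_{X_1}Y_1)$ and ${}^{M_2}\nabla_{X_2}J_2Y_2-J_2({}^{M_2}\nabla_{X_2}Y_2)$, and the terms $\tfrac{1}{2f^2}X_1(f^2)J_2Y_2$ also cancel between the two sides.

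What is left after these cancellations is, in the first (horizontal) component,
$$g_2(X_2,J_2Y_2)\,\mathrm{grad}(f^2)=g_2(X_2,Y_2)\,J_1(\mathrm{grad}(f^2)),$$
and in the second (vertical) component,
$$(J_1Y_1)(f^2)\,X_2=Y_1(f^2)\,J_2X_2,\qquad\text{i.e.}\qquad df^2(J_1Y_1)\,X_2=df^2(Y_1)\,J_2X_2.$$
Using $J_2$-compatibility $g_2(X_2,J_2Y_2)=g_2(J_2X_2,Y_2)$ rewrites the first equation in the symmetric form displayed in the theorem (the left-hand side of the second displayed condition should read $g_2(J_2\,\cdot,\cdot)$), and the second equation is exactly $(df^2\circ J_1)\otimes I=df^2\otimes J_2$ as tensors on $T(M_1)\otimes T(M_2)$. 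Conversely, if both identities hold then every term on the two sides matches component-wise, so $\widetilde{\nabla}\widetilde{J}=0$.

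The step that requires a bit of care, rather than any genuine obstacle, is the bookkeeping: one must remember that the $(1,1)$-tensor $J_1$ acts only on $M_1$-vectors and $J_2$ only on $M_2$-vectors, that $\mathrm{grad}(f^2)$ lives in $T(M_1)$ so $J_1(\mathrm{grad}(f^2))$ makes sense, and that $g_2$ is the fiber metric unscaled by $f^2$. Once the cancellations from $\nabla J_i=0$ and from the matching $\tfrac{1}{2f^2}X_1(f^2)J_2Y_2$ terms are carried out, the remaining residue splits cleanly into a horizontal and a vertical identity, and each must vanish for arbitrary inputs, giving the two stated conditions. No other assumptions are used.
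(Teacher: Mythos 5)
Your computation is correct and is precisely the argument the paper compresses into one line (substitute $\widetilde{J}\widetilde{Y}=(J_1Y_1,J_2Y_2)$ into formula (\ref{e1}), cancel the terms killed by $^{M_1}\nabla J_1=0$, $^{M_2}\nabla J_2=0$ and the common $\frac{1}{2f^2}X_1(f^2)J_2Y_2$ term, and read off the horizontal and vertical residues), so you have simply supplied the details of the paper's own proof. Your side remark is also right: since $J_1$ does not act on $T(M_2)$, the second displayed condition should read $g_2(J_2\cdot,\cdot)\cdot grad(f^2)=g_2(\cdot,\cdot)\cdot J_1(grad(f^2))$, the occurrence of $J_1$ inside $g_2$ in the statement being a typo.
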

\begin{proof}
Replacing the expression of $\widetilde{\nabla}$ from (\ref{e1}), under the assumptions $^{M_1}\nabla J_1=0$ and $^{M_2}\nabla J_2=0$ we obtain the conclusion.
\end{proof}

\begin{theorem}
Let $(\widetilde{M}=M_{1} \times_f M_{2},\widetilde{g},\widetilde{J})$ (with $\widetilde{g}$ given by (\ref{e7}) and $\widetilde{J}$ (\ref{e67})) be the warped product of the metallic Riemannian manifolds $(M_{1},g_{1}, J_{1})$ and $(M_{2},g_{2},J_{2})$. If $M_1$ and $M_2$ have $J_1$- and $J_2$-invariant Ricci tensors, respectively (i.e. $Q_{M_i}\circ J_i=J_i\circ Q_{M_i}$, $i\in \{1,2\}$), then $\widetilde{M}$ has $\widetilde{J}$-invariant Ricci tensor if and only if
$$Hess(f)(J_1\cdot,\cdot)-Hess(f)(\cdot,J_1\cdot)\in \{0\}\times T(M_2).$$
\end{theorem}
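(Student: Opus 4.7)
The plan is to test the $\widetilde{J}$-invariance relation $\widetilde{S}(\widetilde{J}\widetilde{X},\widetilde{Y})=\widetilde{S}(\widetilde{X},\widetilde{J}\widetilde{Y})$ directly, by decomposing everything into horizontal and vertical pieces and applying Lemma \ref{l1} term by term. First I would write $\widetilde{X}=X_1+X_2$ and $\widetilde{Y}=Y_1+Y_2$ with $X_1,Y_1\in\mathcal{L}(M_1)$ and $X_2,Y_2\in\mathcal{L}(M_2)$, and note that formula (\ref{e67}) gives $\widetilde{J}\widetilde{X}=J_1X_1+J_2X_2$, so the horizontal/vertical splitting is preserved by $\widetilde{J}$. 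Since part (2) of Lemma \ref{l1} tells us that $\widetilde{S}$ kills any mixed pair (horizontal, vertical), the difference $\widetilde{S}(\widetilde{J}\widetilde{X},\widetilde{Y})-\widetilde{S}(\widetilde{X},\widetilde{J}\widetilde{Y})$ decouples into a purely horizontal contribution and a purely vertical contribution, to be analysed separately.

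For the horizontal contribution, part (1) of Lemma \ref{l1} gives
\[
\bigl[\widetilde{S_{M_1}}(J_1X_1,Y_1)-\widetilde{S_{M_1}}(X_1,J_1Y_1)\bigr]-\tfrac{m}{f}\bigl[H^f(J_1X_1,Y_1)-H^f(X_1,J_1Y_1)\bigr].
\]
The hypothesis $Q_{M_1}\circ J_1=J_1\circ Q_{M_1}$, together with the $J_1$-compatibility of $g_1$, is equivalent to $S_{M_1}(J_1\cdot,\cdot)=S_{M_1}(\cdot,J_1\cdot)$, so its lift also enjoys this symmetry and the first bracket vanishes, leaving only the $H^f$-term.

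For the vertical contribution, part (3) of Lemma \ref{l1} produces
\[
\bigl[\widetilde{S_{M_2}}(J_2X_2,Y_2)-\widetilde{S_{M_2}}(X_2,J_2Y_2)\bigr]-\Bigl[\tfrac{\Delta(f)}{f}+(m-1)\tfrac{|grad(f)|^2}{f^2}\Bigr]\bigl[\widetilde{g}(J_2X_2,Y_2)-\widetilde{g}(X_2,J_2Y_2)\bigr].
\]
The first bracket vanishes by the analogous $J_2$-invariance of $S_{M_2}$; the second vanishes because, after pulling out the conformal factor $f^2$, it reduces to $g_2(J_2X_2,Y_2)-g_2(X_2,J_2Y_2)=0$ from the metallic Riemannian compatibility of $g_2$.

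Combining the two, the $\widetilde{J}$-invariance of $\widetilde{S}$ is equivalent to $H^f(J_1X_1,Y_1)=H^f(X_1,J_1Y_1)$ for all $X_1,Y_1\in\mathcal{L}(M_1)$, which is exactly the stated condition on $Hess(f)$: the defect $Hess(f)(J_1\cdot,\cdot)-Hess(f)(\cdot,J_1\cdot)$, viewed as a tensor on $\widetilde{M}$ through its lift, is required to vanish on horizontal arguments (equivalently, to be concentrated in the vertical directions $\{0\}\times T(M_2)$, where both $H^f$ and this difference are trivially zero). The main obstacle I anticipate is bookkeeping: keeping the identifications between $X_i$ and its horizontal/vertical lift consistent, and correctly interpreting the action of $\widetilde{J}$ on each summand so that the cancellations in the horizontal and vertical blocks proceed cleanly; the metallic compatibility of $g_2$ is crucial to killing the $f$-dependent conformal factor in the vertical block, and without it one would pick up an extra scalar term involving $\Delta(f)$ and $|grad(f)|^2$.
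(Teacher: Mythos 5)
Your proposal is correct and follows essentially the same route as the paper: decompose into horizontal and vertical blocks (the mixed block vanishing by $S(X,V)=0$), apply Lemma \ref{l1} in each block, cancel the $\widetilde{S_{M_i}}$-terms via the assumed $J_i$-invariance of the Ricci tensors and the scalar term via the $J_2$-compatibility of $g_2$, so that the only surviving obstruction is $H^f(J_1X_1,Y_1)-H^f(X_1,J_1Y_1)$ on horizontal arguments. Your write-up is in fact slightly more explicit than the paper's (which does not even mention the mixed terms), and your reading of the condition ``$Hess(f)(J_1\cdot,\cdot)-Hess(f)(\cdot,J_1\cdot)\in\{0\}\times T(M_2)$'' as vanishing on horizontal arguments is the intended one.
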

\begin{proof}
If we denote by $S$, $S_{M_1}$, $S_{M_2}$ the Ricci curvature tensors on $\widetilde{M}$, ${M_1}$ and ${M_2}$ and $\widetilde{S_{M_1}}$, $\widetilde{S_{M_2}}$ the lift on $\widetilde{M}$ of $S_{M_1}$ and $S_{M_2}$,
by using Lemma \ref{l1}, for any
$X$, $Y\in \mathcal{L}({M_1})$, we have:
$$S(\widetilde{J}X,Y)=\widetilde{S_{M_1}}(\widetilde{J}X,Y)-\frac{m}{f}H^f(\widetilde{J}X,Y)=\widetilde{S_{M_1}}(X,\widetilde{J}Y)-\frac{m}{f}H^f(\widetilde{J}X,Y)=$$
$$=S(X,\widetilde{J}Y)+\frac{m}{f}H^f(X,\widetilde{J}Y)-\frac{m}{f}H^f(\widetilde{J}X,Y),$$
where $H^f$ is the lift on $\widetilde{M}$ of $Hess(f)$. Also, for any $V$, $W\in \mathcal{L}({M_2})$, we obtain:
$$S(\widetilde{J}V,W)=\widetilde{S_{M_2}}(\widetilde{J}V,W)-[f\Delta(f)+(m-1)|grad(f)|^2]g_2(J_2V,W)=$$
$$=\widetilde{S_{M_2}}(V,\widetilde{J}W)-[f\Delta(f)+(m-1)|grad(f)|^2]g_2(V,J_2W)=S(V,\widetilde{J}W).$$
\end{proof}

\section{Example}

Let $M:=\{(u,\alpha_{1},\alpha_{2},...,\alpha_{n}), u > 0, \alpha_{i} \in [0, \frac{\pi}{2}], i \in \{1,...,n\} \}$ and $f:M\rightarrow \mathbb{R}^{2n}$ is the immersion given by:
\begin{equation} \label{e74}
f(u,\alpha_{1},...,\alpha_{n}):=(u \cos\alpha_{1},u \sin\alpha_{1},...,u \cos\alpha_{n},u \sin\alpha_{n}).
\end{equation}

We can find a local orthonormal frame of the submanifold $M$ in $\mathbb{R}^{2n}$, spanned by the vectors:
\begin{equation} \label{e75}
Z_{0}=\sum_{i=1}^{n} \left(\cos \alpha_{i}\frac{\partial}{\partial x_{i}}+\sin \alpha_{i}\frac{\partial}{\partial y_{i}}\right), \quad
Z_{i}= -u \sin \alpha_{i}\frac{\partial}{\partial x_{i}}+u \cos \alpha_{i}\frac{\partial}{\partial y_{i}},
\end{equation}
for any $i \in \{1,...,n\}$.

We remark that $\|Z_{0}\|^{2}=n$, $\|Z_{i}\|^{2}=u^{2}$,
 $Z_{0} \bot Z_{i}$, for any $i \in \{1,...,n\}$ and $Z_{i} \bot Z_{j}$, for $i \neq j$, where $i, j \in \{1,...,n\}$.

In the next considerations, we shall denote by: $$(X^{1},Y^{1},...,X^{k},Y^{k},X^{k+1},Y^{k+1},...,X^{n},Y^{n})=:(X^{i},Y^{i},X^{j},Y^{j}),$$ for any $k \in \{2,...,n-1\}$, $ i \in \{1,...,k\}$ and $j \in \{k+1,...,n\}$.

Let $J:\mathbb{R}^{2n}\rightarrow \mathbb{R}^{2n}$ be the $(1,1)$-tensor field defined by:
\begin{equation} \label{e76}
J(X^{i},Y^{i},X^{j},Y^{j}):=(\sigma X^{i},\sigma Y^{i},\overline{\sigma}X^{j},\overline{\sigma}Y^{j}),
\end{equation}
for any $k \in \{2,...,n-1\}$, $ i \in \{1,...,k\}$ and $j \in \{k+1,...,n\}$, where $\sigma:=\sigma_{p,q}$ is the metallic number and $\overline{\sigma}=1-\sigma$. It is easy to verify that $J$ is a metallic structure on $\mathbb{R}^{2n}$ (i.e. $J^{2}=pJ+qI$).

Moreover, the metric $\overline{g}$, given by the scalar product $\langle\cdot,\cdot\rangle$ on $\mathbb{R}^{2n}$, is $J$-compatible and $(\mathbb{R}^{2n},\overline{g},J)$ is a metallic Riemannian manifold.

From (\ref{e75}) we get:
$$
JZ_{0}=\sigma \sum_{i=1}^{k}\left(\cos \alpha_{i}\frac{\partial}{\partial x_{i}}+\sin \alpha_{i}\frac{\partial}{\partial y_{i}}\right)+
\overline{\sigma}\sum_{j=k+1}^{n}\left(\cos \alpha_{j}\frac{\partial}{\partial x_{j}}+\sin \alpha_{j}\frac{\partial}{\partial y_{j}}\right)
$$
and, for any $k \in \{2,...,n-1\}, i \in \{1,...,k\}$ and $ j \in \{k+1,...,n\}$ we get:
$$JZ_{i}=\sigma Z_{i}, \quad JZ_{j}=\overline{\sigma}Z_{j}.$$

We can verify that $JZ_{0}$ is orthogonal to $span\{Z_{1},...,Z_{n}\}$ and
\begin{equation} \label{e77}
\cos(\widehat{JZ_{0},Z_{0}}) = \frac{k\sigma +(n-k)\bar{\sigma}}{\sqrt{n(k\sigma^{2} +(n-k)\bar{\sigma}^{2})}}.
\end{equation}

If we consider the distributions $D_{1}=span \{Z_{1},...,Z_{n}\}$ and $D_{2}=span\{Z_{0}\}$, then $D_{1} \oplus D_{2}$ is a semi-slant submanifold of the metallic Riemannian manifold $(\mathbb{R}^{2n},\langle\cdot,\cdot\rangle,J)$. The Riemannian metric tensor on $D_{1} \oplus D_{2}$ is given by $g=n du^{2} + u^{2}\sum_{i=1}^{n}d\alpha_{i}^{2}$, thus $M$ is a warped product submanifold of the metallic Riemannian manifold $(\mathbb{R}^{2n},\langle\cdot,\cdot\rangle,J)$.

\bibliographystyle{amsplain}

{\bf Adara M. Blaga} \\
West University of Timi\c{s}oara\\
Timi\c{s}oara, 300223, Romania\\
e-mail: adarablaga@yahoo.com

\medskip

{\bf Cristina E. Hre\c{t}canu}\\
Stefan cel Mare University of Suceava \\
Suceava, 720229, Romania\\
e-mail: criselenab@yahoo.com

\end{document}